\newtheorem{theorem}{Theorem}[section]
\newtheorem{corollary}[theorem]{Corollary}
\theoremstyle{definition}
\newtheorem{definition}[theorem]{Definition}
\newtheorem{example}[theorem]{Example}
\numberwithin{equation}{section}
\numberwithin{equation}{section}
\begin{document}
%\baselineskip = 22pt
%-----------------------------------------------------------------------------
% Title and Abstract
%-----------------------------------------------------------------------------

\title {essential ideal transforms}

\author[R. H. Mustafa]{Runak H. Mustafa}
\address{Mathematics Department, Faculty of Science\\
	Soran University\\ 44008, Soran, Erbil
	Kurdistan Region, Iraq}
\email {rhm310h@maths.soran.edu.iq}

\author[I. Akray]{Ismael Akray}
\address{Mathematics Department, Faculty of Science\\
	Soran University\\ 44008, Soran, Erbil
	Kurdistan Region, Iraq}

\email {ismael.akray@soran.edu.iq}
\subjclass [2020]{13C11, 46M18, 13D45}
\keywords{Contravariant and covariant essential derived functor, essential projective, essential injective, essential ideal transforms}

\begin{abstract}
It is our intention in this research generalized some concept in local cohomology such as contravarint functor $ext$, covariant functor $Ext$, covarian functor $Tor$ and ideal transforms with $e$-exact sequences. The $e$-exact sequence was introduced by Akray and Zebari \cite{AZ} in 2020. We obtain for a torsion-free modules $B$, $_eext^n_R(P,B)=0$ while $_eExt^n_R(A,E)=0$ for every module $A$. Also for any torsion-free module $B$ we have an $e$-exact sequence $0\to \Gamma_{a}(B) \to B\to D_{a}(B)\to H^1_{a}(B)\to 0$ and an isomorphisms between $B$ and $r D_{a}(B)$. Finally  we generalize Mayer-Vietories with $e$-exact sequences in essential local cohomology, we get a special $e$-exact sequences.
\end{abstract}

\maketitle

\section{Introduction}
  Throughout this article, $R$ will denote a Noetherian domain and $B$ torsion free e-enjective $R$-module.  In 1972, R. S. Mishra introduced a generalization for split sequence where a semi-sequence $M_{i-1}\stackrel{f_{i-1}}\to M_i\stackrel{f_i}\to M_{i+1}$ is called semi-split if $Ker(f_i)$ is a direct summand of $M_i$ \cite{H}. So a semi-split is split if and only if it is an exact. In 1999, Davvaz and parnian-Goramaleky introduced a generalization for exact sequences called it a $U$-exact sequence \cite {DP}. A submodule $N$ of an $R$-module $M$ is called essential or large in $M$ if it has non-zero intersection with every non-zero submodule of $M$ and denoted by $N\leqslant_e M$. Akray and Zebari in 2020 \cite{AZ} gives another generalization to exact sequences of modules and instead of the equality of $Im(f)$ with $Ker(g)$ they took $Im(f)$ as a large (essential) submodule of $Ker(g)$ in a sequence $A\stackrel{f} \to D\stackrel{g}\to C$ and called it essential exact sequence  or simply $e$-exact sequence. Equivalently, a sequence of $R$-modules and $R$-morphisms $\dots\to N_{i-1} \stackrel{f_{i-1}}\to N_i\stackrel{f_i}\to N_{i+1}\to \dots$ is said to be essential exact (e-exact) at $N_i$, if $Im(f_{i-1})\leqslant_e Ker(f_i)$ and to be e-exact if it is an e-exact at $N_i$ for all $i$. In particular, a sequence of $R$-modules and $R$-morphisms $0\to L\stackrel{f_1} \to M\stackrel{f_2}\to N\to 0$ is a short e-exact sequence if and only if  $Ker(f_1)=0, Im(f_1)\leqslant_e Ker(f_2)$ and $Im(f_2)\leqslant_e N$. They studied some basic properties of $e$-exact sequences and established their connection with notions in module theory and homological algebra \cite{ZA}. Also, F. Campanini and A. Facchini were worked on $e$-exact sequences and studied the relation of $e$-exactness with some related functors like the functor defined on the category of $R$-modules to the spectral category of $R$-modules and the localization functor with respect to the singular torsion theory \cite{CF}. Furthermore, Akray and R. H. Mustafa in 2023 introduced and proved further properties of $e$-exact sequences and we will restrict our discussion to their applications on both injective modules and the torsion functor of local cohomology \cite{MA}.The local cohomology was introduced by Grothendieck in a seminar in Harvard 1961 and written up by Hartshorne in 1967. Next, this subject was studied by Hartshorne and numerous authors even in the recent years see \cite{R}, \cite{MR} and \cite{G}.

  In this research we generalize some concept in local cohomology such as contravarint functor $_eext$, covariant functor $_e{Ext}$ and ideal transforms with $e$-exact sequences.
  
   In section two, we describe the concept $_eext_n^R$ and $_eExt_n^R$ as well as we investigate the propertes each of them. For example, let $0\to A'\to A\to A''\to 0$ be an $e$-exact sequence of $R$-modules , then there is a long $e$-exact sequence $0\to r_1Hom(A'',B)\to r_2Hom(A,B)\to r_3Hom(A',B)\to {_eext^1_R(A''.B)}\to {_eext^1_R(A,B)}\to \dots $ for some nonzero element $r_1, r_2, r_3\in R$ and also we have for any $R$-module $A$ and $E$ is an $e$--injective $R$-module, $_eExt^n_R(A,E)=0$, for all $n\geq1$.

  In section three, we construct essential ideal transform and we find the new $e$-exact sequences by generalized the idea of Mayer-vietores sequence. Also we prove that for any torsion-free $R$-module $B$ there exists $0\ne r\in R$ such that $\epsilon^*: B\to rD_{a}(B)$ is an isomorphism if and only if $\Gamma_{a}(B)= H^1_{a}(B)=0$ and also we show that there is an $e$-exact sequences $0\to D_{r(a+b)}(B)\to D_a(B)\bigoplus D_b(B)\to D_{a\cap b}(B)\to rH^2_{r(a+b)}(B)\to rH^2_{a}(B)\bigoplus rH^2_{b}(B)\to rH^2_{a\cap b}(B)\to \dots$.  
  
  \hskip 5cm
\section{contravariant and covariant right essential derived functors}
\subsection{Contravariant essential derived functor}
In this subsection we want to describe contravariant right derived functors $ext^n_R$ on the $e$-projective resolutions call it essential derived functors (berifly $_eext_n^R)$ and discuss some properties of them. On the other hand, we present some definition that are central for our object such as essential injective and essential projective module as the following:
\begin{definition}\cite{MA}
	An $R$-module $E$ is an e-injective if satisfies the following condition: for any monic $f_1:A_1\rightarrow A_2$
	and any map $f_2:A_1\rightarrow E$, there exist $0\ne r \in R$ and $f_3:A_2\rightarrow E$ such that $f_3f_1=rf_2$.
	\[\begin{tikzcd}[sep=2cm]
		{} & E & {} \\
		0 \ar{r} & A_1 \ar{u}{f_2} \ar[r,swap]{r}{f_1} & A_2 \ar[dashed,swap]{lu}{f_3}
	\end{tikzcd}\]
	In this case, we say the map $f_3$ is essentially extends to the map $f_2$. An $e$-injective module may not be injective, for example the $\mathbb{Z}$-module $\mathbb{Z}$ is $e$-injective module, but it is not injective see \cite[Example 2.3]{MA}. 	
\end{definition}

\begin{definition} {\cite{AZ}}
	An $e$-exact sequence $0\rightarrow A\stackrel i\to B\stackrel p\to C\to 0$ is $e$-split if there exist $0\ne s \in R$ and a morphism $ j:C \rightarrow B $\:(or $f:B \rightarrow A $) such that $ pj=sI_C$ (or $\: fi=sI_A)$.
\end{definition}
\begin{definition}\cite{AZ}
	An $R$-module $P$ is $e$-projective if it satiesfies the following condition: for any $e$-epic map  $f_1:A_1\rightarrow A_2$
	and any map $f_2:P\rightarrow A_2$, there exist $0\ne r \in R$ and $f_3:P\rightarrow A_1$ such that $f_1f_3=rf_2$.
	\[\begin{tikzcd}[sep=2cm]
		{} & P\ar{d}{f_2} \ar[dashed,swap]{ld}{f_3} & {} \\
		A_1 \ar{r}{f_1} & A_2 \ar{r} & 0
	\end{tikzcd}\]
\end{definition}
The following example shows that an $e$-projective module may not be projective.
\begin{example}
	Consider we have an $e$-exact sequence of $\frac{\mathbb{Z}}{16\mathbb{Z}}$-modules $0\to \frac{4\mathbb{Z}}{16\mathbb{Z}}\stackrel{f}\to \frac{\mathbb{Z}}{16\mathbb{Z}}\stackrel{g}\to \frac{2\mathbb{Z}}{16\mathbb{Z}}\to 0$, where $f(x+16\mathbb{Z}) = x+16\mathbb{Z}$ and $g(x+16\mathbb{Z})=4x+16\mathbb{Z}$ is an $e$-split, because we have a map $f_1:\frac{2\mathbb{Z}}{16\mathbb{Z}}\to{\frac{\mathbb{Z}}{16\mathbb{Z}}}, f_1(x+16\mathbb{Z})=x+16\mathbb{Z}$ such that $g\circ f_1(x+16\mathbb{Z})=g(x+16\mathbb{Z})=4x+16\mathbb{Z}=4I_\frac{2\mathbb{Z}}{16\mathbb{Z}}$. Thus we obtain $\frac{2\mathbb{Z}}{16\mathbb{Z}}$ is an $e$-projective as $\frac{\mathbb{Z}}{16\mathbb{Z}}$-module, while it is not projective.
\end{example}
\begin{definition}
	An e-projective resolution of an $ R- $module $ A $ is an e-exact sequence $ \dots\to P_{n+1}\stackrel{d_{n+1}} \to P_n \dots \stackrel{d_2}\to P_1\stackrel{d_1} \to P_0\stackrel{d_0} \to A \to 0 $ where each $P_n$ is an e-projective $R$-module. If $T$ is a contravariant functor, then $(R^nT)A\:=H^n(TP_A)=\frac{KerTd_{n+1}}{ImTd_n}$, where $ \dots \to P_{n+1} \to P_n\to \dots \to P_1 \stackrel{d_1}\to P_0 \to 0:P_A $ is a deleted $e$-projective resolution of an $R$-module $A$. In particular, we put $T= Hom(\: ,B)$, and define $_eext^n_R(\: ,B)=R^nT$. Then $_eext^n_R(A,B)=H^n(Hom_R(P_A,B))$, which means $_eext^n_R(A,B)=\frac{Kerd^{n^*}}{Im d^{(n-1)^*}}$,\\ 
where $d^{n^*}:Hom(P_{n-1},B)\to Hom(P_n,B)$ is defined as usual by $d^{n^*}:f\longmapsto fd^{n}$ 
\end{definition}
\begin{theorem}\label{ne}
	Let $A$ be an $R$-module. Then $_eext^n_R(A,B)=0$ for all negative integer $n$.
\end{theorem}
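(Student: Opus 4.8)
The plan is to unwind the definition of $_eext^n_R(A,B)$ as the cohomology of the cochain complex obtained by applying the contravariant functor $Hom_R(\,,B)$ to a deleted $e$-projective resolution $P_A$ of $A$, and then to observe that in negative degrees this complex is identically zero.

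First I would record that a deleted $e$-projective resolution is supported only in non-negative degrees: by definition it has the form $\dots \to P_1 \stackrel{d_1}\to P_0 \to 0$, so we may set $P_k = 0$ for every integer $k < 0$. Applying $Hom_R(\,,B)$ then produces a cochain complex whose term in degree $k$ is $Hom_R(P_k,B)$, and since $Hom_R(0,B)=0$ this term vanishes for all $k<0$.

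Next I would invoke the defining formula $_eext^n_R(A,B)=Ker\, d^{n^*}/Im\, d^{(n-1)^*}$, where $d^{n^*}:Hom_R(P_{n-1},B)\to Hom_R(P_n,B)$. For any negative integer $n$ we have $n-1<0$, hence $P_{n-1}=0$ and $Hom_R(P_{n-1},B)=0$. Since $Ker\, d^{n^*}$ is a submodule of $Hom_R(P_{n-1},B)=0$, the numerator of the defining quotient is zero, and therefore $_eext^n_R(A,B)=0$, as claimed.

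I do not anticipate a genuine obstacle here: the statement is a purely formal consequence of the vanishing of the cohomology of a complex concentrated in non-negative degrees. The only point deserving a moment's care is matching the index conventions so that one confirms $Hom_R(P_{n-1},B)=0$ (the source of $d^{n^*}$) rather than merely the target $Hom_R(P_n,B)$; this guarantees the \emph{kernel} itself is trivial. One may also remark that the argument does not depend on the choice of $e$-projective resolution, so no well-definedness issue intervenes for this claim.
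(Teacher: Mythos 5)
Your proposal is correct and follows essentially the same route as the paper: both arguments observe that the deleted $e$-projective resolution, and hence the cochain complex $Hom_R(P_\bullet,B)$, is concentrated in non-negative degrees, so the cohomology vanishes for $n<0$. Your version is slightly more careful about the index conventions (checking that the source of $d^{n^*}$ is zero so the kernel itself vanishes), but the substance is identical.
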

\begin{proof}
	Suppose that $ \dots \to P_{n+1} \to P_n\to \dots \to P_1 \to P_0 \to A \to 0:P $ be an $e$-projective resolution for $A$. Then the deleted complex of $A$ is $ \dots \to P_{n+1} \to P_n\to \dots \to P_1 \to P_0\to 0:P_A $, after applying $Hom(\:,B)$ on the deleted complex, we get $0\to Hom(P_0,B)\to Hom(P_1,B)\to Hom(P_2,B)\to\dots $by \cite[Theorem 2.7]{AZ}, which implies that $Hom(P_n,B)=0$ for all negative integer number $n$. Hence $_eext^n_R(A,B)=0$ for all negative integer number $n$. 
\end{proof}
\begin{theorem}\label{co}
	Let $A$ be $R$-module. Then $_eext^0_R(A,B)\cong rHom(A,B)$ for some $0\ne r\in R$.
\end{theorem}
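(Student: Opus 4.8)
The plan is to compute ${_eext^0_R(A,B)}$ directly from the deleted resolution and to compare it with $Hom(A,B)$ through the augmentation map. Writing the $e$-projective resolution as $\dots\to P_1\stackrel{d_1}\to P_0\stackrel{d_0}\to A\to 0$, after applying $Hom(\:,B)$ the zeroth cohomology is ${_eext^0_R(A,B)}=Ker\big(d_1^{*}:Hom(P_0,B)\to Hom(P_1,B)\big)=\{g:P_0\to B\mid g d_1=0\}$. Since $Im(d_1)\subseteq Ker(d_0)$ gives $d_0 d_1=0$, the comparison map $d_0^{*}:Hom(A,B)\to {_eext^0_R(A,B)}$, $f\mapsto f d_0$, is well defined, and I would prove the theorem by showing that $d_0^{*}$ is injective and that after multiplying by a suitable $0\ne r\in R$ every element of ${_eext^0_R(A,B)}$ lands in its image.

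First I would record the elementary fact used throughout: if $N\leqslant_e M$ and $h:M\to B$ vanishes on $N$, then $h=0$. Indeed $Ker(h)\supseteq N$ is then essential in $M$, so the quotient $M/Ker(h)\cong Im(h)$ is torsion (over the domain $R$ every nonzero cyclic submodule meets the essential $Ker(h)$), while $Im(h)\subseteq B$ is torsion-free; hence $Im(h)=0$. Applying this with $N=Im(d_0)\leqslant_e A$ (recall that $d_0$ is $e$-epic) yields injectivity of $d_0^{*}$: if $f d_0=0$ then $f$ kills the essential submodule $Im(d_0)$ of $A$, so $f=0$. This also shows $Hom(A,B)$ is torsion-free, so that $Im(d_0^{*})\cong Hom(A,B)$.

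For surjectivity up to a scalar, take $g\in {_eext^0_R(A,B)}$, so $g$ vanishes on $Im(d_1)$. Since $Im(d_1)\leqslant_e Ker(d_0)$, the same fact applied to $g|_{Ker(d_0)}$ shows that $g$ vanishes on all of $Ker(d_0)$; therefore $g$ factors as $g=\bar g\circ d_0$ for a unique $\bar g:Im(d_0)\to B$. Now $Im(d_0)\hookrightarrow A$ is monic and $B$ is $e$-injective, so there exist $0\ne r\in R$ and $\tilde g:A\to B$ with $\tilde g|_{Im(d_0)}=r\bar g$. Composing with $d_0$ gives $d_0^{*}(\tilde g)=\tilde g d_0=r\bar g d_0=r g$, so $r g\in Im(d_0^{*})$. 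Combined with the previous paragraph, multiplication by $r$ identifies ${_eext^0_R(A,B)}$ with $rHom(A,B)$, which is the assertion.

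The main obstacle I anticipate is the dependence of $r$ on the chosen $g$: $e$-injectivity produces, for each $\bar g$, its own scalar, whereas the statement wants a single $r$ realizing the isomorphism ${_eext^0_R(A,B)}\cong rHom(A,B)$. I would address this by pinning down how $r$ is permitted to vary — either by arguing that the extension can be taken with one common scalar (exploiting that $R$ is Noetherian and that the comparison factors through the fixed essential inclusion $Im(d_0)\leqslant_e A$), or by reading the conclusion as identifying ${_eext^0_R(A,B)}$ with the image of multiplication by $r$ on $Hom(A,B)$. Checking that the factorization $g=\bar g d_0$ and the $e$-injective extension are mutually compatible (so that $d_0^{*}(\tilde g)=rg$ holds exactly) is routine once the torsion-free cancellation above is in place.
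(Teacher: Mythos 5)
Your skeleton coincides with the paper's: both identify ${_eext^0_R(A,B)}$ with $Ker(d_1^{*})$ and compare it to $Hom(A,B)$ via the map induced by the augmentation (the paper's $\epsilon^{*}$ is exactly your $d_0^{*}$), proving that map monic and then ``epic up to a scalar.'' The mechanisms differ in the second half. The paper argues surjectivity by a direct division construction: given $f\in Ker(d_1^{*})$ it uses essentiality of $Im(\epsilon)$ in $A$ to write $ra=\epsilon(p_0)$ and then \emph{defines} $g:A\to B$ by $rg(a)=f(p_0)$ --- a map whose existence and well-definedness are asserted rather than proved (it implicitly requires dividing by $r$ in $B$). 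You instead first show that any $g\in Ker(d_1^{*})$ kills all of $Ker(d_0)$ (since it kills the essential submodule $Im(d_1)$ and $B$ is torsion-free), factor $g$ through $Im(d_0)$, and extend along the inclusion $Im(d_0)\hookrightarrow A$ using the standing hypothesis that $B$ is $e$-injective. That is a genuinely different and more defensible route; the torsion-free cancellation lemma you isolate also replaces the paper's appeal to left $e$-exactness of $Hom(-,B)$ for injectivity, and both uses of it are correct.

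The obstacle you flag at the end is real and should not be waved away: $e$-injectivity supplies a scalar $r$ depending on the particular $g$ (equivalently on $\bar g$), so what your argument actually proves is that $Im(d_0^{*})\cong Hom(A,B)$ sits as an essential submodule of ${_eext^0_R(A,B)}$, with $r_g\, g\in Im(d_0^{*})$ for each $g$ separately. Passing to a single $r$ with ${_eext^0_R(A,B)}\cong rHom(A,B)$ needs the ideal $\{r\in R: r\,Ker(d_1^{*})\subseteq Im(d_0^{*})\}$ to be nonzero, and neither your sketch nor the paper verifies this --- the paper's own $r$ likewise varies with the element $a$, so the gap is inherited from the source rather than introduced by you. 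If the intended reading of the theorem is only ``essentially isomorphic'' (image essential and the map monic), your argument already delivers it; if a uniform $r$ is really claimed, an additional finiteness argument is required in both proofs.
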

\begin{proof}
	Let $ \dots \to P_{n+1} \to P_n\to \dots \to P_1\stackrel{d_1} \to P_0 \stackrel{\epsilon}\to A \to 0 $ be an $e$-projective resolution for $A$. By definition $_eext^0_R(A,B)= H^0(Hom(P_A,B))=\frac{Kerd_1^*}{Imd_\epsilon^*}=Kerd_1^*$. But left $e$-exactness of $Hom(\:,B)$ gives an $e$-exact sequence $0\to Hom(A,B)\stackrel{\epsilon^*}\to Hom(P_0,B)\stackrel{d_1^*} \to Hom(P_1,B)\stackrel{d_2^*}\to Hom(P_2,B)\to\dots $ by \cite[Proposition 2.7]{MA}. We define $\epsilon^*:Hom(A,B)\to Kerd_1^*$, since $Im\epsilon^* \leqslant_e Kerd_1^*$, $\epsilon^*$ is well-defined and since $Hom(\:,B)$ is a left $e$-exact functor then $\epsilon^*$ is monic. Now, we want to prove that $\epsilon^*$ is an epic. Let $f\in Ker d_1^*$ where $f:P_0\to B$, then $d^*_1(f(p_0))=f(d_1(p_1))=0$. We have $Im\epsilon\leqslant_e A$ so there exist $a'\in A$ and $0\ne r\in R$ such that $\epsilon (p_0)=ra'$. Now, we define $g:A\to B$ by $rg(a)=b$ for fixed $r\in R$. Let $a_1, a_2\in A$ and $a_1=a_2$. Then $ra_1=ra_2$ implies that $rg\epsilon(p_1)=rg\epsilon(p_2)$ so we botain $f(p_1)=f(p_2)$ and $b_1=b_2$ thus $g$ is well-defined. Now, $rf(p_0)=rg\epsilon(p_0)=g\epsilon(rp_0)=\epsilon^*(g(a))$. Hence $\epsilon^*$ is an isomorphism to $rKerd_1^{*}$ because, ${_eext^0_R(A,B)}=Ker d_1^*$ and ${_eext^0_R(A,B)}$ is isomorphic to $rHom(A,B)$.
\end{proof}
\begin{theorem}
	Let $P$ be an $e$-projective $R$-module, then $_eext^n_R(P,B)=0$, for all $n\geq1$. 
\end{theorem}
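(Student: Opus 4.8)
The plan is to exploit that an $e$-projective module possesses an especially short $e$-projective resolution. I would take $P_0=P$ with augmentation $d_0=I_P$ the identity map and $P_n=0$ for every $n\geq 1$, giving
\[ \cdots \to 0 \to 0 \to P \stackrel{I_P}{\to} P \to 0. \]
The first step is to confirm that this is a genuine $e$-projective resolution of $P$. Each term is $e$-projective ($P$ by hypothesis and the zero modules trivially), so only $e$-exactness must be checked: at the augmentation we have $Im(I_P)=P\leqslant_e P$, while at $P_0=P$ the kernel $Ker(I_P)=0$ receives the zero image of $P_1=0$, so that $0\leqslant_e 0$; all higher positions are $e$-exact for trivial reasons.

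Next I would delete the augmentation, obtaining the complex $\cdots \to 0 \to 0 \to P \to 0$ with $P$ in degree $0$, and apply the contravariant functor $Hom(\,\cdot\,,B)$. This produces the cochain complex
\[ 0 \to Hom(P,B) \to 0 \to 0 \to \cdots, \]
which is concentrated in degree $0$. Because every term in positive degree is the zero module, the cohomology of this complex vanishes in all degrees $n\geq 1$; by the defining formula $_eext^n_R(P,B)=H^n(Hom(-,B))$ computed on this resolution, this yields $_eext^n_R(P,B)=0$ for every $n\geq 1$.

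The point that requires genuine care — and which I expect to be the main obstacle — is the justification that $_eext^n_R(P,B)$ may be computed from this particular resolution rather than from an arbitrary one. In the classical theory this is supplied by the comparison theorem, which shows that derived functors are independent of the chosen projective resolution. Here one needs its $e$-analogue: any two $e$-projective resolutions of $P$ should be connected by chain maps that are mutually inverse up to essential homotopy, so that the resulting cohomology modules agree, allowing for the multiplication-by-$r$ slack already visible in Theorem \ref{co}, where $_eext^0_R(A,B)\cong rHom(A,B)$. Granting this comparison principle for $e$-projective resolutions, the computation above transfers to every resolution and the vanishing $_eext^n_R(P,B)=0$ for all $n\geq 1$ follows.
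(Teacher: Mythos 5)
Your proposal is correct and follows essentially the same route as the paper: use the trivial resolution $0\to P\stackrel{1_P}\to P\to 0$, delete the augmentation, apply $Hom(\,\cdot\,,B)$, and observe the complex is concentrated in degree $0$. Your added remark about needing an $e$-analogue of the comparison theorem to justify independence of the chosen resolution is a genuine point that the paper's proof silently assumes.
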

\begin{proof}
	Since $P$ is an $e$-projective, the $e$-projective resolution is $0\to P\stackrel{1_P}\to P\to 0$. The corresponding deleted $e$-projective resolution $P_P$ is $0\to P\to 0$. By applying $Hom(\:,B)$ to the deleted complex we obtain $_eext^n_R(P,B)=0$ for all $n\geq 1$.
\end{proof}
\begin{corollary}\label{long}
	Let $0\to A'\to A\to A''\to 0$ be an $e$-exact sequence of $R$-modules , then there is a long $e$-exact sequence $0\to r_1Hom(A'',B)\to r_2Hom(A,B)\to r_3Hom(A',B)\to {_eext^1_R(A''.B)}\to {_eext^1_R(A,B)}\to \dots $ for some nonzero element $r_1, r_2, r_3\in R$.
\end{corollary}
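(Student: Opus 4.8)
The plan is to run the usual derived-functor machine that turns a short exact sequence into a long exact cohomology sequence, but performed inside the essential framework and with careful bookkeeping of the nonzero scalars that the $e$-lifting properties introduce. The first step is an essential Horseshoe Lemma: starting from $e$-projective resolutions $P'_\bullet\to A'$ and $P''_\bullet\to A''$, I would build an $e$-projective resolution $P_\bullet\to A$ sitting in a short exact sequence of complexes $0\to P'_\bullet\to P_\bullet\to P''_\bullet\to 0$ lying over the given $0\to A'\to A\to A''\to 0$. Setting $P_n=P'_n\oplus P''_n$ handles the objects and makes each degree a genuine split short exact sequence; the augmentation and the off-diagonal entries of the differentials are produced by lifting through the $e$-epimorphisms, which by the definition of $e$-projective exist only after multiplication by some nonzero $r\in R$. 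Because $R$ is a domain, multiplication by such an $r$ is injective and sends essential submodules to essential submodules, so these scalar corrections keep the total complex $e$-exact; assembling them so that $P_\bullet$ is genuinely an $e$-projective resolution of $A$ is the technical core of the argument.

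Next I would apply the contravariant functor $\operatorname{Hom}(-,B)$. Since each $0\to P'_n\to P_n\to P''_n\to 0$ is split, applying $\operatorname{Hom}(-,B)$ gives degreewise split short exact sequences $0\to\operatorname{Hom}(P''_n,B)\to\operatorname{Hom}(P_n,B)\to\operatorname{Hom}(P'_n,B)\to 0$, so that $0\to\operatorname{Hom}(P''_\bullet,B)\to\operatorname{Hom}(P_\bullet,B)\to\operatorname{Hom}(P'_\bullet,B)\to 0$ is a short exact sequence of cochain complexes; the left $e$-exactness of $\operatorname{Hom}(-,B)$ from \cite[Proposition 2.7]{MA} is what guarantees this behaves well at the augmented end. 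Passing to cohomology via the snake lemma then yields the long exact sequence $\dots\to{_eext^n_R(A'',B)}\to{_eext^n_R(A,B)}\to{_eext^n_R(A',B)}\to{_eext^{n+1}_R(A'',B)}\to\dots$, the connecting maps being the usual zig-zag morphisms on $H^n=\ker/\operatorname{im}$, and Theorem \ref{ne} truncates this on the left at $n=0$.

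Finally I would identify the three degree-zero terms. By Theorem \ref{co} there are isomorphisms ${_eext^0_R(A'',B)}\cong r_1\operatorname{Hom}(A'',B)$, ${_eext^0_R(A,B)}\cong r_2\operatorname{Hom}(A,B)$ and ${_eext^0_R(A',B)}\cong r_3\operatorname{Hom}(A',B)$ for suitable nonzero $r_1,r_2,r_3\in R$; substituting these for the first three nodes gives the asserted sequence. The main obstacle is twofold: the essential Horseshoe construction, where the lifted maps close up only after multiplication by nonzero scalars and one must verify that the resulting ambiguity is absorbed into the essential submodules used to form the $_eext^n$ and hence disappears in cohomology; and the reconciliation of the three distinct multipliers $r_1,r_2,r_3$, which is precisely why the first three morphisms are only $e$-exact rather than exact. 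Controlling these scalars consistently so that a single long $e$-exact sequence emerges is the step demanding the most care.
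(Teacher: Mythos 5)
Your proposal is correct and follows essentially the same route as the paper: an essential Horseshoe Lemma producing $0\to P_{A'}\to P_A\to P_{A''}\to 0$ (which the paper simply cites as \cite[Theorem 3.7]{ZA}), application of $\operatorname{Hom}(-,B)$ using its $e$-exactness properties from \cite[Proposition 2.7]{MA}, the long $e$-exact cohomology sequence (cited as \cite[Theorem 3.2]{ZA}), and finally Theorems \ref{ne} and \ref{co} to truncate and identify the degree-zero terms as $r_i\operatorname{Hom}(-,B)$. The only difference is that you sketch the construction of the horseshoe resolution and the zig-zag argument explicitly, whereas the paper invokes them as black boxes from the cited references.
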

\begin{proof}
By \cite[Theorem 3.7]{ZA}, we have an $e$-exact sequence of deleted complexes $0\to P_{A'}\to P_A \to P_{A''}\to 0$ . If $T=Hom(\:,B)$, then $0\to TP_{A''}\to TP_A \to TP_{A'}\to 0$ is still $e$-exact by \cite[Proposition 2.7]{MA}. Then by \cite[Theorem 3.2]{ZA} we have an $e$-exact sequence  $0\to H^0(Hom(P_{A''},B))\to H^0(Hom(P_A,B))\to H^0(Hom(P_{A'},B))\to H^1(Hom(P_{A''}.B))\to H^1(Hom(P_A,B))\to \dots $. By using the definition of $_eext_n^R$, Theorem \ref{co} and Theorem \ref{ne} we obtain an $e$-exact sequence   $0\to r_1Hom(A'',B)\to r_2Hom(A,B)\to r_3Hom(A',B)\to {_eext^1_R(A''.B)}\to {_eext^1_R(A,B)}\to \dots $ for some nonzero element $r_1, r_2, r_3\in R$.
\end{proof}
\begin{theorem}
	Given a commutative diagram of $R$-modules having $e$-exact rows as the following:
		\[ \begin{tikzcd}[arrows={-Stealth}]
		0\rar & A'\rar["i"]\dar["f"] & A\rar["p"]\dar["g"] & A''\rar \dar["h"] & 0\\
		0\rar & C'\rar["j"]& C\rar["q"] & C''\rar & 0
	\end{tikzcd}	\] Then there is a commutative diagram of $R$-modules with $e$-exact rows:
	\[ \begin{tikzcd}[arrows={-Stealth}]
	{_eext^n_R(A'',B)}\rar["p*"] & {_eext^n_R(A,B)}\rar["i^*"] & {_eext^n_R(A',B)}\rar["\sigma^n"] & {_eext^{n+1}_R(A'',B)}\\
	{_eext^n_R(C'',B)}\rar["q*"]\uar["h^*"] & {_eext^n_R(C,B)}\rar["j^*"]\uar["g^*"]& {_eext^n_R(C',B)}\rar["\sigma'^n"]\uar["f^*"] & {_eext^{n+1}_R(C'',B)}\uar["h^*"]
\end{tikzcd}	\],
\end{theorem}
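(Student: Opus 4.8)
The plan is to run the standard two-step strategy of homological algebra, but adapted to the essential setting: first lift the given morphism of short $e$-exact sequences to a morphism between short $e$-exact sequences of $e$-projective resolutions, then apply $\mathrm{Hom}(-,B)$ and read off the ladder from the naturality of the connecting homomorphism in the long $e$-exact sequence. The commutativity of the squares that do not involve the connecting maps will be cheap; the square built from $\sigma^n$ and $\sigma'^n$ will be the real content.

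First I would invoke \cite[Theorem 3.7]{ZA} on each row to produce $e$-projective resolutions $P_{A'},P_A,P_{A''}$ and $P_{C'},P_C,P_{C''}$ sitting in short $e$-exact sequences of deleted complexes $0\to P_{A'}\to P_A\to P_{A''}\to 0$ and $0\to P_{C'}\to P_C\to P_{C''}\to 0$ that lie over the two rows of the given diagram. Then I would use the comparison theorem for $e$-projective resolutions to lift $f,g,h$ to chain maps $\tilde f\colon P_{A'}\to P_{C'}$, $\tilde g\colon P_A\to P_C$, $\tilde h\colon P_{A''}\to P_{C''}$. Because $e$-projectivity only yields lifts after multiplication by a nonzero scalar, these chain maps will make the resolution-level squares commute only essentially, i.e. up to some $0\ne r\in R$; the uniqueness of such lifts up to homotopy is what guarantees that the induced maps on cohomology are independent of all these choices.

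Next I would apply the contravariant functor $T=\mathrm{Hom}(-,B)$. By \cite[Proposition 2.7]{MA} the two short $e$-exact sequences of deleted complexes become short $e$-exact sequences of cochain complexes
\[0\to TP_{A''}\to TP_A\to TP_{A'}\to 0,\qquad 0\to TP_{C''}\to TP_C\to TP_{C'}\to 0,\]
and $T\tilde h,T\tilde g,T\tilde f$ supply a morphism between them. Passing to cohomology via \cite[Theorem 3.2]{ZA} produces the two long $e$-exact sequences that form the rows of the claimed diagram, together with the horizontal maps $p^*,i^*,q^*,j^*$ and the connecting maps $\sigma^n,\sigma'^n$; the vertical maps ${_eext^n_R(C'',B)}\to{_eext^n_R(A'',B)}$ etc. are those induced by $T\tilde h,T\tilde g,T\tilde f$. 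That these coincide with the functorially induced maps and that all squares not containing a connecting map commute then follows from the contravariant functoriality of ${_eext^n_R(-,B)}$ applied to the two commuting squares of the original diagram, namely $i^*g^*=f^*j^*$ and $p^*h^*=g^*q^*$.

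The main obstacle will be the two squares containing the connecting homomorphisms, i.e. verifying $\sigma^n\circ f^*=h^*\circ\sigma'^n$. Classically this is the naturality of the snake-lemma connecting map, proved by chasing a cocycle of $TP_{A'}$ through the morphism of complexes; here the same chase must be performed for $e$-exact sequences, where each preimage and each lift exists only after multiplying by a nonzero element of $R$. I would therefore extract the explicit recipe for $\sigma^n$ from the proof of \cite[Theorem 3.2]{ZA}, carry out the chase on both composites, and track the accumulated scalars to show that $\sigma^n\circ f^*$ and $h^*\circ\sigma'^n$ agree after multiplication by a common nonzero $r\in R$ — which is precisely the sense in which the square $e$-commutes. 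Once this essential naturality of $\sigma$ is in hand, the entire ladder commutes and both rows are $e$-exact, which finishes the proof.
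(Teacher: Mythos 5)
Your proposal follows essentially the same route as the paper: build the short $e$-exact sequence of deleted $e$-projective resolutions via \cite[Theorem 3.7]{ZA}, apply $\mathrm{Hom}(-,B)$ using \cite[Proposition 2.7]{MA}, and pass to the long $e$-exact cohomology sequences. The only difference is that where you propose to verify the naturality of the connecting homomorphism by an explicit scalar-tracking diagram chase, the paper simply cites \cite[Remark 3.3]{ZA} for that commutative ladder, so your argument is the same approach made more self-contained.
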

\begin{proof}
By \cite[Theorem 3.7]{ZA}, we have an $e$-exact sequence of deleted complexes $0\to P_{A'}\to P_A \to P_{A''}\to 0$ . If $T=Hom(\:,B)$, then $0\to TP_{A''}\to TP_A \to TP_{A'}\to 0$ is still $e$-exact by \cite[Proposition 2.7]{MA}. By \cite[Remark 3.3]{ZA} there is a commutative diagram of $R$-modules and $R$-morphisms as the following
	\[ \begin{tikzcd}[sep=0.2cm]
		\dots \to H^{n-1}(Hom(P_{A''},B)\rar["i^*"] & H^{n-1}(Hom(P_{A},B))\rar ["\sigma"] & H^n(Hom(P_{A'},B)\rar & \dots \\%
		\dots \to H^{n-1}(Hom(P_{C''},B)\rar["j^*"]\uar["g^*"] &	H^{n-1}(Hom(P_{C},B))\rar["\sigma^*"]\uar["h^*"] & H^n(Hom(P_{C'},B))\rar\uar["f^*"]&\dots 
	\end{tikzcd}	\] and our proof will be complete by using the definition of ${_eext^n_R(A,B)}=H^n(Hom(P_A,B))$.
\end{proof}
\subsection{Covariant essential derived functor $_eExt$}
In this subsection we want to describe covariant right derived functors $Ext^n_R$ on the $e$-injective resolution call it covariant essential derived functors (breifly $_eExt_n^R$) and discuss some properties of them and prove some results under acceptable conditions. In homology, $ext^R_n$ and $Ext^R_n$ are equivalent but this is not the case for $_eext^R_n$ and $_eExt^R_n$. We begin with the following definition.
\begin{definition}
	An e-injective resolution of an $ R- $module $ A $ is an e-exact sequence $ 0\to A\stackrel \eta \to E^0 \stackrel{d^{0}}\to E^1 \stackrel{d^{1}}\to ...\to E^n \stackrel{d^{n}}\to E^{n+1} \to ... $ where each $E^i$ is an e-injective $R$-module. If $T$ is a covariant functor,then $(R^nT)A=H^n(TE^M)=\frac{KerTd^n}{ImTd^{n-1}}$, where $E^A:0 \to E^0\stackrel {d^0} \to E^1 \stackrel {d^1}\to \dots$ is deleted e-injective resolution of an $R$-module $M$. In particular, we put $T= Hom(A,\: )$, for any $R$-module $A$, we define $_eExt^n_R(A,\:)=R^nT$. Then $_eExt^n_R(A,M)=H^n(Hom_R(A,E^M))$, which means $_eExt^n_R(A, M)=\frac{Kerd^{n}_*}{Im d_*^{n-1}}$,\\
	 where $d^{n}_*:Hom(A,E^n)\to Hom(A,E^{n+1})$ is defined as usual by $d^{n}_*:f\longmapsto d^{n}f$ 
\end{definition}
\begin{theorem}
	Let $E$ be an $e$-injective $R$-module, then for $R$-module $A$, $_eExt^n_R(A,E)=0$, for all $n\geq1$ 
\end{theorem}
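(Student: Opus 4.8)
The plan is to mirror the proof of the earlier theorem asserting $_eext^n_R(P,B)=0$ for $n\geq1$ when $P$ is $e$-projective: the whole point is that an $e$-injective module admits a \emph{trivial} $e$-injective resolution, so the covariant derived functor $_eExt^n_R(A,\:)$ computed from a resolution of the second variable collapses above degree $0$. Since $_eExt$ is defined by resolving the second argument, and here that argument $E$ is itself $e$-injective, no higher terms appear.

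First I would exhibit the one-step sequence $0\to E\stackrel{1_E}\to E\to 0$ as an $e$-injective resolution of $E$. To justify calling it a resolution, I would verify $e$-exactness and the $e$-injectivity of each term: the map $1_E$ is monic with image all of $E$, hence $Im(1_E)\leqslant_e Ker(E\to 0)=E$, while the terminal condition is vacuous; moreover $E^0=E$ is $e$-injective by hypothesis and $E^n=0$ for $n\geq1$ is trivially $e$-injective. Thus the deleted $e$-injective resolution $E^E$ is $0\to E\to 0$, concentrated in degree $0$ with $E^0=E$ and $E^n=0$ for all $n\geq1$.

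Next I would apply the covariant functor $Hom(A,\:)$ to $E^E$. Because $Hom(A,0)=0$, this yields the complex $0\to Hom(A,E)\to 0$, again concentrated in degree $0$, so that $Hom(A,E^n)=0$ for every $n\geq1$. Invoking the definition $_eExt^n_R(A,E)=H^n(Hom(A,E^E))=\frac{Ker d^n_*}{Im d^{n-1}_*}$, and noting that the complex vanishes in all positive degrees, each such cohomology module is zero; hence $_eExt^n_R(A,E)=0$ for all $n\geq1$.

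I do not expect any genuine obstacle here, since the argument is a direct computation once the trivial resolution is in hand; the analogue for $e$-projectives in the previous theorem proceeds identically. The only step requiring care is confirming that $0\to E\stackrel{1_E}\to E\to 0$ is a bona fide $e$-injective resolution, that is, checking the $e$-exactness condition $Im(1_E)\leqslant_e E$ and recording that the zero module satisfies the $e$-injectivity property vacuously. Everything beyond this is forced by $Hom(A,0)=0$.
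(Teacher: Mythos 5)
Your proposal is correct and follows essentially the same route as the paper: both take the trivial $e$-injective resolution $0\to E\stackrel{1_E}\to E\to 0$, pass to the deleted complex $0\to E\to 0$, apply $Hom(A,\:)$, and conclude that the cohomology vanishes in positive degrees. You in fact supply slightly more detail than the paper (explicitly checking the $e$-exactness of the trivial resolution), and you correctly apply $Hom(A,\:)$ where the paper's text has the typo $Hom(\:,E)$.
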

\begin{proof}
	Since $E$ is an $e$-injective module, the $e$-injective resolution of $E$ is $0\to E\stackrel{1_E}\to E\to 0$. The corresponding deleted $e$-injective resolution $E^E$ is $0\to E\to 0$. By applying $Hom(\:,E)$ to the deleted complex we obtain $_eExt^n_R(A,E)=0$ for all $n\geq 1$.
\end{proof}
\begin{corollary}
	Let $0\to A''\to A\to A'\to 0$ be a short $e$-exact sequence of $R$-modules and $P$ be an $e$-projective module, then there is a long $e$-exact sequence $0\to Hom(P,A'')\to Hom(P,A)\to Hom(P,A')\to {_eExt^1_R(P,A'')}\to {_eExt^1_R(P,A)}\to \dots $
\end{corollary}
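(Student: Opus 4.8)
The plan is to mirror the proof of Corollary~\ref{long}, replacing the contravariant functor $Hom(\:,B)$ and $e$-projective resolutions by the covariant functor $Hom(P,\:)$ and $e$-injective resolutions, and then feeding the resulting short $e$-exact sequence of complexes into the long $e$-exact homology sequence. Concretely, starting from $0\to A''\to A\to A'\to 0$, I would first produce a short $e$-exact sequence of deleted $e$-injective resolutions $0\to E^{A''}\to E^{A}\to E^{A'}\to 0$ by the $e$-injective (dual) version of the horseshoe construction of \cite[Theorem 3.7]{ZA}, arranging that $E^n_{A}=E^n_{A''}\oplus E^n_{A'}$ in each degree, so that every row $0\to E^n_{A''}\to E^n_A\to E^n_{A'}\to 0$ is ($e$-)split.

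Next I would apply $T=Hom(P,\:)$. Because each such degree-$n$ row is split, and because the hypothesis that $P$ is $e$-projective guarantees that $e$-splittings are carried to $e$-splittings (the defining relation $pj=sI$ is preserved under $Hom(P,\:)$), the image $0\to Hom(P,E^{A''})\to Hom(P,E^{A})\to Hom(P,E^{A'})\to 0$ is again a short $e$-exact sequence of complexes. Invoking the long $e$-exact homology sequence of \cite[Theorem 3.2]{ZA} then yields $0\to H^0(Hom(P,E^{A''}))\to H^0(Hom(P,E^{A}))\to H^0(Hom(P,E^{A'}))\to H^1(Hom(P,E^{A''}))\to\dots$, with connecting maps supplied as usual.

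It remains to identify the terms. By definition $H^n(Hom(P,E^{M}))={_eExt^n_R(P,M)}$, so the tail reads $\dots\to{_eExt^1_R(P,A'')}\to{_eExt^1_R(P,A)}\to\dots$ as required. For the three $H^0$ terms I would run the argument of Theorem~\ref{co} in the covariant direction: the left $e$-exactness of $Hom(P,\:)$ (the covariant counterpart of \cite[Proposition 2.7]{MA}) applied to $0\to M\to E^0\stackrel{d^0}\to E^1$ gives that $\eta_*$ is monic with $Im(\eta_*)\leqslant_e Ker(d^0_*)={_eExt^0_R(P,M)}$, whence ${_eExt^0_R(P,M)}\cong Hom(P,M)$ up to multiplication by a nonzero $r\in R$, exactly as the scalars $r_1,r_2,r_3$ occur in Corollary~\ref{long}. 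Substituting these identifications produces the stated long $e$-exact sequence.

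The main obstacle I anticipate is the very first step, namely the $e$-injective analogue of the horseshoe lemma: lifting a short $e$-exact sequence to a degreewise ($e$-)split short $e$-exact sequence of $e$-injective resolutions. Since $e$-injectivity only provides extensions up to a nonzero scalar, constructing the maps $E^n_A\to E^n_{A'}$ and the differentials compatibly forces one to keep track of these scalars, and it is precisely here that one must check that the assembled sequence of complexes is genuinely $e$-exact and not merely $e$-exact in each separate degree. Once this lemma is secured, the remaining steps are formal consequences of \cite[Theorem 3.2]{ZA}, the covariant left $e$-exactness of $Hom(P,\:)$, and the covariant analogue of Theorem~\ref{co}.
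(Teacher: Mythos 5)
Your proposal follows essentially the same route as the paper: a short $e$-exact sequence of deleted $e$-injective resolutions (the dual horseshoe step, which the paper obtains by citing \cite[Proposition 2.10]{MA} rather than reproving it), preservation of $e$-exactness under $Hom(P,\:)$, the long $e$-exact homology sequence of \cite[Theorem 3.2]{ZA}, and identification of the $H^0$ terms (for which the paper cites \cite[Theorem 5.3]{ZA}). The only divergence is that your identification of $H^0(Hom(P,E^M))$ with $Hom(P,M)$ carries nonzero scalar factors, which is arguably the more careful reading, whereas the paper's statement and proof suppress them.
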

\begin{proof}
	By \cite[Proposition 2.10]{MA}, we have an $e$-exact sequence of deleted complexes $0\to E''^{A''}\to E^A \to E'^{A'}\to 0$ . If $T=Hom(P,\:)$, then $0\to TE^{A''}\to TE^A \to TE^{A'}\to 0$ is still $e$-exact by \cite[Theorem 3.1]{AZ}. Then by \cite[Theorem 3.2]{ZA} we have an $e$-exact sequence\\  $0\to H^0(Hom(P,E^{A''}))\to H^0(Hom(P,E^A))\to H^0(Hom(P,E^{A'}))\to H^1(Hom(P,E^{A''}))\to H^1(Hom(P,E^A))\to \dots $. By using the definition of $_e$Ext and \cite[Theorem 5.3]{ZA} we obtain an $e$-exact sequence  $0\to Hom(P,A'')\to Hom(P,A)\to Hom(P,A')\to {_eExt^1_R(P,A'')}\to {_eExt^1_R(P,A)}\to \dots $
\end{proof}
\begin{theorem}
	Let $P$ be an $e$-projective module and given a commutative diagram of $R$-modules having $e$-exact rows as the following:
	\[ \begin{tikzcd}[arrows={-Stealth}]
		0\rar & A''\rar["i"]\dar["f"] & A\rar["p"]\dar["g"] & A'\rar \dar["h"] & 0\\
		0\rar & C''\rar["j"]& C\rar["q"] & C'\rar & 0
	\end{tikzcd}	\] Then there is a commutative diagram of $R$-modules with $e$-exact rows:
	\[ \begin{tikzcd}[arrows={-Stealth}]
		{_eExt^n_R(P,A'')}\rar["i*"]\dar["f^*"] & {_eExt^n_R(P,A)}\rar["p^*"]\dar["g^*"] & {_eExt^n_R(P,A')}\rar["\sigma^n"]\dar["h^*"] & {_eExt^{n+1}_R(P,A'')}\dar["f^*"]\\
		{_eExt^n_R(P,C'')}\rar["j*"] & {_eExt^n_R(P,C)}\rar["q^*"]& {_eExt^n_R(P,C')}\rar["\sigma'^n"] & {_eExt^{n+1}_R(P,C'')}
	\end{tikzcd}	\]
\end{theorem}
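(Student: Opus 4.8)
The plan is to run the same machine that produced the naturality square in the contravariant case, but now on $e$-injective resolutions and with the covariant functor $T=Hom(P,\:)$. First I would realize the two given short $e$-exact rows by $e$-injective resolutions: choose $e$-injective resolutions $E^{A''},E^{A},E^{A'}$ fitting into a short $e$-exact sequence of deleted complexes $0\to E^{A''}\to E^{A}\to E^{A'}\to 0$, and likewise $0\to E^{C''}\to E^{C}\to E^{C'}\to 0$, using \cite[Proposition 2.10]{MA}. The essential point is to do this compatibly with the vertical data $f,g,h$, that is, to lift the given commutative diagram to a single morphism between these two short $e$-exact sequences of deleted $e$-injective resolutions.

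Next, since $P$ is $e$-projective, the covariant functor $T=Hom(P,\:)$ carries each of these short $e$-exact sequences of complexes to a short $e$-exact sequence of cochain complexes: by \cite[Theorem 3.1]{AZ} one obtains $0\to Hom(P,E^{A''})\to Hom(P,E^{A})\to Hom(P,E^{A'})\to 0$ and the analogous sequence for $C$, together with the morphism between them induced by $f,g,h$.

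Then I would invoke the naturality of the long $e$-exact cohomology sequence, \cite[Remark 3.3]{ZA}, applied to this morphism of short $e$-exact sequences of complexes. This yields a commutative ladder whose rows are the long $e$-exact sequences $\cdots\to H^n(Hom(P,E^{A''}))\to H^n(Hom(P,E^{A}))\to H^n(Hom(P,E^{A'}))\stackrel{\sigma^n}{\to} H^{n+1}(Hom(P,E^{A''}))\to\cdots$ and the corresponding one for $C$, with vertical arrows $f^*,g^*,h^*$ and $f^*$ on the connecting term. Identifying $H^n(Hom(P,E^{M}))$ with ${_eExt^n_R(P,M)}$ via the definition of $_eExt$ then gives exactly the asserted commutative diagram with $e$-exact rows.

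The main obstacle is the first step: constructing the two short $e$-exact sequences of $e$-injective resolutions so that a single morphism between them lifts $(f,g,h)$ and commutes with the differentials. In the ordinary setting this is the horseshoe lemma together with the comparison theorem for injective resolutions; here every equality of images and kernels must be weakened to an essential-submodule relation, and every lifting is determined only up to a nonzero scalar of $R$ (as in the definition of $e$-injective). I expect that checking the squares genuinely commute — rather than commute only up to an essential or scalar ambiguity — and that the horizontal sequences remain $e$-exact after these choices, is where the technical care is required. Once the lifted morphism of resolutions is in hand, \cite[Theorem 3.1]{AZ} and \cite[Remark 3.3]{ZA} finish the argument formally.
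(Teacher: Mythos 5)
Your proposal follows essentially the same route as the paper's own proof: resolve by $e$-injective resolutions via \cite[Proposition 2.10]{MA}, apply $Hom(P,\:)$ using \cite[Theorem 3.1]{AZ}, and conclude by the naturality of the long $e$-exact cohomology sequence from \cite[Remark 3.3]{ZA} together with the definition of $_eExt$. Your additional remark that the real technical content lies in lifting $(f,g,h)$ to a genuine morphism of resolutions (a horseshoe-plus-comparison argument in the $e$-exact setting) is a point the paper itself leaves implicit, but the overall argument is the same.
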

\begin{proof}
	By \cite[Proposition 2.10]{MA}, we have an $e$-exact sequence of deleted complexes $0\to E^{A''}\to E^A \to E^{A'}\to 0$ . If $T=Hom(P,\:)$, then $0\to TE^{A''}\to TE^A\to TE^{A'}\to 0$ is still $e$-exact by \cite[Theorem 3.1]{AZ}. By \cite[Remark 3.3]{ZA} there is a commutative diagram of $R$-modules and $R$-morphisms as the following
	\[ \begin{tikzcd}[sep=0.2cm]
		\dots \to H^{n-1}(Hom(P,E^A))\rar["p^*"]\dar["g^*"] & H^{n-1}(Hom(P,E^{A'}))\rar ["\sigma"]\dar["h^*"] & H^n(Hom(P,E^{A''}))\rar \dar["f^*"]& \dots \\%
		\dots \to H^{n-1}(Hom(P,E^{C}))\rar["q^*"] &	H^{n-1}(Hom(P,E^{C'}))\rar ["\sigma'^*"]\rar & H^n(Hom(P,E^{C''}))\rar&\dots 
	\end{tikzcd}	\] and our proof will be complete by using the definition of ${_eExt^n_R(P,A)}=H^n(Hom(P,E^A))$.
\end{proof}

\section{ideal transforms regarding to essential exact sequences}
 Throughout this section $R$ be a hereditary domain. All our work in this section applies to two particular systems of ideals the system of ideals $\beta=(a^n)_{n\in N}$ and the system of ideals $\beta=(a^n+b^n)_{n\in N}$by \cite[Example 3.12]{MR}. Then the ideal transforms with respect to an ideal $a$ is defined as $D_a(B)=\varinjlim_{n\in N} Hom(a^n,B)$. It is a covariant $R$-linear functor as well as it is left $e$-exact sequence because $Hom(a^n,B)$ is a left $e$-exact sequence and direct limit preserves $e$-exactness. If a system of ideals $(a^n)_{n\in N}$ is an inverse family of ideals, then there is a natural equivalence $\varinjlim_{n\in N} Hom(\frac{R}{a^n},B)\cong \Gamma_a(B)$ as well as we have a natural equivalent between $\varinjlim_{n\in N} {_eext^i_R(\frac{R}{a^n},B)}$ and ${_eH^i_a(B)}$. Let $0\to A \to B \to C\to 0$ be an $e$-exact sequence. If $C$ is an $e$-projective, then by \cite[Proposition 3.2]{AZ} the $e$-exact sequence $0\to A \to B \to C\to 0$ is an $e$-split and then by \cite[Proposition 2.5]{MA} $rB\cong A\oplus C$. Therefore, there is a submodule $C$ of $B$ with $C\cong \frac{rB}{A}$.
\begin{theorem}\label{RD}
	 For any torsion-free $R$-module $B$. The sequence $0\to \Gamma_{a}(B) \to B\to D_{a}(B)\to H^1_{a}(B)\to 0$ is an $e$-exact.
\end{theorem}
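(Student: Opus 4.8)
The plan is to reproduce inside the $e$-exact framework the classical four-term sequence that, for ordinary exact sequences, links the torsion functor, the module itself, the ideal transform and the first local cohomology. The driving input is the family of (genuinely exact, hence $e$-exact) sequences $0\to a^n\to R\to R/a^n\to 0$, to which I apply the contravariant long $e$-exact sequence of Corollary \ref{long}, and then pass to the direct limit over $n$, using that the limit identifications for $\Gamma_a$, $D_a$ and $H^1_a$ recorded at the start of this section are natural.

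Fixing $n$ and setting $A'=a^n$, $A=R$, $A''=R/a^n$ in Corollary \ref{long}, I obtain a long $e$-exact sequence
\[
0\to r_1 Hom(R/a^n,B)\to r_2 Hom(R,B)\to r_3 Hom(a^n,B)\to {_eext^1_R(R/a^n,B)}\to {_eext^1_R(R,B)}\to\cdots
\]
for suitable nonzero $r_1,r_2,r_3\in R$. Since $R$ is free over itself it is $e$-projective, so ${_eext^n_R(R,B)}=0$ for all $n\geq 1$ by the vanishing result for $e$-projective modules; combined with the canonical identification $Hom(R,B)\cong B$ this truncates the sequence to the four-term $e$-exact sequence
\[
0\to r_1 Hom(R/a^n,B)\to r_2 B\to r_3 Hom(a^n,B)\to {_eext^1_R(R/a^n,B)}\to 0 .
\]

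Next I pass to the direct limit over $n$ along the transition maps induced by the ladder of maps of short exact sequences coming from the inclusions $a^{n+1}\hookrightarrow a^n$, whose middle column is the identity of $R$. Because the direct limit functor preserves $e$-exactness, the resulting limit sequence is again $e$-exact, and it remains to name the four terms. By the natural equivalences of this section, $\varinjlim_n Hom(R/a^n,B)\cong\Gamma_a(B)$, $\varinjlim_n Hom(a^n,B)=D_a(B)$ by definition of the ideal transform, and $\varinjlim_n {_eext^1_R(R/a^n,B)}\cong H^1_a(B)$, while the constant middle system has limit $B$. This produces the asserted sequence $0\to\Gamma_a(B)\to B\to D_a(B)\to H^1_a(B)\to 0$. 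I note in passing that, since $B$ is torsion-free over the domain $R$, any nonzero element of $a^n$ kills no nonzero element of $B$, whence every $R$-homomorphism $R/a^n\to B$ vanishes, so $Hom(R/a^n,B)=0$ and $\Gamma_a(B)=0$; thus the leftmost arrow is automatically the inclusion of the zero module and the real content lies in the three-term tail ending in $H^1_a(B)$.

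The step I expect to be the main obstacle is handling the scalar factors $r_1,r_2,r_3$, which may vary with $n$, and verifying that they are harmless in the limit. The essential observation is that $B$ and each $Hom(a^n,B)$ are torsion-free, since a nonzero $r\in R$ acts injectively on $Hom(a^n,B)$ precisely because it does so on $B$; hence $rM\leqslant_e M$ for every nonzero $r$, and each twisted module $r_iM$ sits as an essential submodule of the untwisted one. One must then check that these essential inclusions are compatible with the direct-system maps, so that the limit of the twisted terms agrees, as an essential submodule, with the limit of the untwisted terms, and that the naturality of $Hom(R,B)\cong B$ and of the equivalences defining $\Gamma_a$ and $H^1_a$ with respect to the transition maps makes the limit diagram coincide with the claimed four-term sequence. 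Pinning down this compatibility is the technical heart of the argument.
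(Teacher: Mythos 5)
Your proposal follows essentially the same route as the paper: apply Corollary \ref{long} to the sequences $0\to a^n\to R\to R/a^n\to 0$, truncate using the $e$-projectivity of $R$ and $Hom(R,B)\cong B$, pass to the direct limit, and invoke the natural equivalences identifying the limits with $\Gamma_a(B)$, $D_a(B)$ and $H^1_a(B)$. If anything you are more careful than the paper, which silently drops the scalar factors $r_1,r_2,r_3$ and their compatibility with the transition maps — the very point you flag as the technical heart.
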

\begin{proof}
	The sequence $0\to a^n \to R\to \frac{rR}{a^n}\to 0$, where $0\ne r\in R$ is an $e$-exact sequence. From above $e$-exact sequence and Corollay \ref{long} induces a long $e$-exat sequence of $_eext^n_R(\:,B)$ modules. Since $R$ is an $e$-projective and since $Hom(R,B)$ is naturally isomorphic to $B$, we therefore obtain a long $e$-exact sequence  $0\to Hom (\frac{rR}{a^n},B)\to B\to Hom(a^n,B)\to {_eext^1_R(\frac{rR}{a^n}},B)\to 0$. Now pass to direct limits $0\to \varinjlim_{n\in N}Hom (\frac{rR}{a^n},B)\to B\to \varinjlim_{n\in N}Hom(a^n,B)\to \varinjlim_{n\in N}{_eext^1_R}(\frac{rR}{a^n},B)\to 0$, then use the natural equivalent to obtain an $e$-exact sequences of $R$-modules and $R$-morphism $0\to \Gamma_{a}(B) \to B\to D_{a}(B)\to H^1_{a}(B)\to 0$. 
\end{proof}
\begin{theorem}\label{hom}
	Let the ideal $a^n$ be a torsion-free module for all natural number $n$. Then $Hom(a^n,B)$ is an $e$-injective.
\end{theorem}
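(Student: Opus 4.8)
The plan is to deduce the $e$-injectivity of $Hom(a^n,B)$ from that of $B$ itself—recall that $B$ is assumed torsion-free and $e$-injective throughout—using the tensor--hom adjunction. Concretely, suppose we are handed a monomorphism $f_1:A_1\to A_2$ together with an arbitrary map $f_2:A_1\to Hom(a^n,B)$; the goal is to produce $0\ne r\in R$ and $f_3:A_2\to Hom(a^n,B)$ with $f_3f_1=rf_2$, which is exactly the $e$-injectivity condition for $Hom(a^n,B)$.

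First I would invoke the natural $R$-linear adjunction $Hom_R(A_i\otimes_R a^n,B)\cong Hom_R(A_i,Hom(a^n,B))$ to reinterpret $f_2$ as a map $\tilde{f_2}:A_1\otimes_R a^n\to B$, determined by $\tilde{f_2}(a\otimes x)=f_2(a)(x)$. The crucial preparatory step is to check that the induced map $f_1\otimes 1_{a^n}:A_1\otimes_R a^n\to A_2\otimes_R a^n$ is still monic, and this is precisely where the torsion-free hypothesis is used: since $R$ is a hereditary (hence Pr\"ufer) domain, a torsion-free module is flat, so $a^n$ is flat and tensoring the monomorphism $f_1$ by $a^n$ preserves injectivity. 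I would then apply the $e$-injectivity of $B$ to the monic $f_1\otimes 1_{a^n}$ and the map $\tilde{f_2}$, obtaining $0\ne r\in R$ and a map $\tilde{f_3}:A_2\otimes_R a^n\to B$ with $\tilde{f_3}\,(f_1\otimes 1_{a^n})=r\tilde{f_2}$.

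Transporting $\tilde{f_3}$ back through the adjunction yields $f_3:A_2\to Hom(a^n,B)$ with $f_3(y)(x)=\tilde{f_3}(y\otimes x)$, and the desired identity is then verified by evaluating on simple tensors: for $a\in A_1$ and $x\in a^n$ one computes $f_3(f_1(a))(x)=\tilde{f_3}(f_1(a)\otimes x)=\big(\tilde{f_3}(f_1\otimes 1_{a^n})\big)(a\otimes x)=r\tilde{f_2}(a\otimes x)=rf_2(a)(x)$, whence $f_3f_1=rf_2$ with the very same scalar $r$. The main point demanding care is the flatness reduction—one must be confident that torsion-freeness of $a^n$ genuinely gives flatness over this $R$ (guaranteed by the Pr\"ufer characterization of flat modules), and that the adjunction is natural enough to carry the scalar $r$ unchanged from $\tilde{f_3}$ back to $f_3$; both hold here because the adjunction isomorphism is $R$-linear in each variable, so multiplication by $r$ commutes with it.
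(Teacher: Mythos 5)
Your argument is essentially the paper's: both rest on the tensor--hom adjunction $Hom(A,Hom(a^n,B))\cong Hom(a^n\otimes A,B)$ together with the $e$-injectivity of the standing module $B$ and the good behaviour of $a^n\otimes-$ on monomorphisms. The paper packages this functorially ($Hom(-,Hom(a^n,B))$ is the composite of the two $e$-exact functors $Hom(-,B)$ and $a^n\otimes-$, the latter cited from Akray--Zebari), whereas you unwind the same adjunction on a single extension problem and justify the monicity of $f_1\otimes 1_{a^n}$ via flatness of the torsion-free ideal over the hereditary domain; the substance is the same.
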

\begin{proof}
	To prove that $Hom(a^n,B)$ is an $e$-injective, we show that\\ $Hom(-, Hom(a^n,B))$ is an $e$-exact functor. By the adjoint isomorphism, this functor is naturally isomorphic to $Hom(a^n\otimes-,B)$ which is the composite $Hom(-,B)\circ(a^n\otimes-)$. By \cite[Proposition 2.7]{MA}, $Hom(-,B)$ is an $e$-exact functor and by \cite[Theorem 2.10]{AZ}, $a^n\otimes-$ is also $e$-exact, so their composite is again $e$-exact.
\end{proof}
\begin{theorem}
For any torsion-free $R$-module $B$ there exists $0\ne r\in R$ such that $\epsilon^*: B\to rD_{a}(B)$ is an isomorphism if and only if $\Gamma_{a}(B)= H^1_{a}(B)=0$.
\end{theorem}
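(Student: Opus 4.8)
The plan is to run the classical Brodmann--Sharp comparison argument in the essential-exact setting, taking Theorem \ref{RD} as the engine. First I would invoke Theorem \ref{RD} to fix the four-term $e$-exact sequence
\[0\to \Gamma_{a}(B)\xrightarrow{\alpha} B\xrightarrow{\epsilon^*} D_{a}(B)\xrightarrow{\beta} H^1_{a}(B)\to 0,\]
and then unwind the definition of $e$-exactness at each spot into the four statements: $\alpha$ is monic, $\operatorname{Im}\alpha\leqslant_e\operatorname{Ker}\epsilon^*$, $\operatorname{Im}\epsilon^*\leqslant_e\operatorname{Ker}\beta$, and $\operatorname{Im}\beta\leqslant_e H^1_{a}(B)$. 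The single combinatorial fact I will use repeatedly is that the zero submodule is essential in a module $M$ only when $M=0$, so an essential containment with a zero outer term collapses the inner term. I would also record at the outset that, since $R$ is a domain and $B$ is torsion-free, each $Hom(a^n,B)$ is torsion-free and hence so is the direct limit $D_{a}(B)$; this torsion-freeness is what lets multiplication by a nonzero $r$ behave like a scaling and is the mechanism behind every appearance of the factor $r$.

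For the direction $(\Leftarrow)$, assume $\Gamma_{a}(B)=H^1_{a}(B)=0$. From $\operatorname{Im}\alpha=\Gamma_{a}(B)=0$ together with $\operatorname{Im}\alpha\leqslant_e\operatorname{Ker}\epsilon^*$ I get $\operatorname{Ker}\epsilon^*=0$, so $\epsilon^*$ is monic. From $H^1_{a}(B)=0$ the map $\beta$ is zero, so $\operatorname{Ker}\beta=D_{a}(B)$, and the middle essentiality becomes $\operatorname{Im}\epsilon^*\leqslant_e D_{a}(B)$. Thus $\epsilon^*$ is a monomorphism whose image is essential in the torsion-free module $D_{a}(B)$. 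Exactly as in the proof of Theorem \ref{co}, essentiality over the domain $R$ forces the quotient $D_{a}(B)/\operatorname{Im}\epsilon^*$ to be torsion, and I would use the hereditary hypothesis on $R$ to select a single $0\ne r\in R$ with $rD_{a}(B)\subseteq\operatorname{Im}\epsilon^*$; corestricting $\epsilon^*$ then yields the desired isomorphism $B\xrightarrow{\cong} rD_{a}(B)$.

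For the direction $(\Rightarrow)$, assume $\epsilon^*\colon B\to rD_{a}(B)$ is an isomorphism for some $0\ne r$. Injectivity gives $\operatorname{Ker}\epsilon^*=0$, and then $\operatorname{Im}\alpha\leqslant_e\operatorname{Ker}\epsilon^*=0$ forces $\Gamma_{a}(B)=\operatorname{Im}\alpha=0$; this half is immediate. The vanishing of $H^1_{a}(B)$ is the substantive part: surjectivity identifies $\operatorname{Im}\epsilon^*$ with $rD_{a}(B)$, so that $\beta$ annihilates $rD_{a}(B)$ and hence $r\cdot\operatorname{Im}\beta=0$; combined with $\operatorname{Im}\beta\leqslant_e H^1_{a}(B)$ I want to conclude $\operatorname{Im}\beta=0$ and therefore $H^1_{a}(B)=0$.

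I expect this last step to be the main obstacle. Unlike the classical situation, $e$-exactness only delivers an essential (not an equality) containment $\operatorname{Im}\epsilon^*\leqslant_e\operatorname{Ker}\beta$, and $H^1_{a}(B)$ can genuinely carry torsion, so knowing merely that $\operatorname{Im}\beta$ is killed by $r$ does not by itself kill it. The argument must therefore use the full force of the isomorphism onto $rD_{a}(B)$ — that $\epsilon^*$ is surjective onto an essential submodule which is itself of the form $rD_{a}(B)$ — together with the hereditary-domain hypothesis on $R$, to show that $\operatorname{Ker}\beta$ cannot be a proper essential extension of $rD_{a}(B)$ inside $D_{a}(B)$, i.e. that $\operatorname{Ker}\beta=D_{a}(B)$. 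Pinning down this comparison, and making the two appearances of $r$ (the Theorem \ref{co}-style scaling in the reverse direction and the one coming from the hypothesis here) compatible, is where the real work lies; the remaining bookkeeping with $\operatorname{Im}$, $\operatorname{Ker}$ and the essential containments is routine.
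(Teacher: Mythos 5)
Your proposal breaks down at exactly the two places where something nontrivial has to happen, and in the reverse direction it substitutes a step that does not work for the mechanism the paper actually uses. For $(\Leftarrow)$, after reducing to ``$\epsilon^*$ is monic with $\operatorname{Im}\epsilon^*\leqslant_e D_a(B)$'' (which matches the paper), you propose to conclude that $D_a(B)/\operatorname{Im}\epsilon^*$ is torsion and then to ``select a single $0\ne r$ with $rD_a(B)\subseteq\operatorname{Im}\epsilon^*$.'' That inference is false in general: an essential submodule of a torsion-free module over a domain has torsion quotient, but the quotient need not be bounded --- $\mathbb{Z}\leqslant_e\mathbb{Q}$ with $\mathbb{Q}/\mathbb{Z}$ unbounded is the standard counterexample, and $\mathbb{Z}$ is hereditary, so the hereditary hypothesis does not rescue the step. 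The paper obtains the single scalar $r$ from a completely different source: Theorem \ref{hom} shows $D_a(B)=\varinjlim Hom(a^n,B)$ is $e$-\emph{injective}, and then \cite[Proposition 2.8]{MA} applied to the monomorphism $B\to\operatorname{Im}\epsilon^*\hookrightarrow D_a(B)$ produces a map $g$ with $f_3\circ g=rI_{D_a(B)}$, forcing $rD_a(B)\subseteq\operatorname{Im}\epsilon^*$. You never invoke Theorem \ref{hom}, and without it (or something equivalent) the existence of the uniform $r$ is a genuine gap.

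For $(\Rightarrow)$, you correctly get $\Gamma_a(B)=0$, but you explicitly stop short of proving $H^1_a(B)=0$, and your diagnosis of the obstruction is accurate: from $\operatorname{Im}\epsilon^*=rD_a(B)$ you only get $r\cdot\operatorname{Im}\beta=0$, and since $H^1_a(B)$ can have torsion, this together with $\operatorname{Im}\beta\leqslant_e H^1_a(B)$ does not force $\operatorname{Im}\beta=0$. A proof attempt that ends with ``this is where the real work lies'' has not established the statement, so the proposal is incomplete as written. (It is fair to note that the paper itself disposes of this direction with the single sentence ``by $e$-exactness and definition of essential we get the result,'' supplying no argument; you have therefore located a real weakness in the theorem's justification, but you have not filled it.)
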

\begin{proof}
	 If $\epsilon^*$ is an isomorphism then by $e$-exactness and definition of essential we get the result. Conversely, suppose that $\Gamma_{a}(B)= H^1_{a}(B)=0$ then by $e$-exactness and definition of essential we obtain $Ker \epsilon^*=0$ implies $\epsilon^*$ is monic. It is remain to show that $\epsilon^*$ is an epic. By Theorem \ref{hom} $D_{a}(B)$ is an $e$-injective and by \cite[Proposition 2.8]{MA} there exists a homomorphism $g:Im\epsilon^*\to D_{a}(B)$ such that $f_3\circ g=r I_{D_{a}(B)}$ which means that $rD_{a}(B)=Im\epsilon^*$. Therefore, $\epsilon^*$ is an isomorpism.
	 \[\begin{tikzcd}[sep=2cm]
	 	{} & D_{a}(B) & {} \\
	 	0 \ar{r} & B \ar{u}{\epsilon^*} \ar[r,swap]{r}{f_1} & Im\epsilon^* \ar[dashed,swap]{lu}{f_3}\ar[lu,<-,shift left=2,"g"]
	 \end{tikzcd}\]
\end{proof}
\begin{theorem}\label{3.4}
	Let ${_eext^n_R(a^n,B)}$ be an $e$-injective torsion-free module. Then there exists $0\ne r\in R$ such that $\eta: {_eext^n_R(a^n,B)}\to r{_eext^{n+1}_R(\frac{rR}{a^n},B)}$ is an isomorphism.
\end{theorem}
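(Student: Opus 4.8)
The plan is to run the same derived-functor machine that produced Theorem~\ref{RD}, but now to read off the \emph{tail} of the long $e$-exact sequence and to use the $e$-injectivity hypothesis on the source to split the connecting morphism.

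First I would feed the $e$-exact sequence $0\to a^n\to R\to \frac{rR}{a^n}\to 0$ into Corollary~\ref{long}, taking $A'=a^n$, $A=R$ and $A''=\frac{rR}{a^n}$. This produces a long $e$-exact sequence whose segment in degrees $n,n+1$ reads
\[
\cdots\to {_eext^n_R(R,B)}\to {_eext^n_R(a^n,B)}\stackrel{\eta}\to {_eext^{n+1}_R(\tfrac{rR}{a^n},B)}\to {_eext^{n+1}_R(R,B)}\to\cdots,
\]
with $\eta$ the connecting morphism. Since $R$ is $e$-projective, the vanishing theorem ${_eext^i_R(P,B)}=0$ for $e$-projective $P$ and $i\ge 1$ annihilates both outer terms for every $n\ge 1$, leaving the short $e$-exact sequence
\[
0\to {_eext^n_R(a^n,B)}\stackrel{\eta}\to {_eext^{n+1}_R(\tfrac{rR}{a^n},B)}\to 0 .
\]
Unwinding $e$-exactness at the two ends gives $Ker\,\eta=0$, so $\eta$ is monic, and $Im\,\eta\leqslant_e {_eext^{n+1}_R(\frac{rR}{a^n},B)}$. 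The scalars $r_i$ decorating Corollary~\ref{long} only replace each module by an isomorphic copy inside the torsion-free ambient and may be absorbed into the single $r$ produced below.

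Write $X={_eext^n_R(a^n,B)}$ and $Y={_eext^{n+1}_R(\frac{rR}{a^n},B)}$. I would first observe that $Y$ is torsion-free: a nonzero torsion element $t\in Y$ would, by essentiality of $Im\,\eta$, meet $Im\,\eta$ in some $0\ne st$, and then $st$ would be a nonzero torsion element of $Im\,\eta\cong X$, contradicting the hypothesis that $X$ is torsion-free. Next, the $e$-injectivity of $X$ applied to the monomorphism $\eta:X\to Y$ together with the identity $I_X:X\to X$ — which is exactly the essential-extension statement of \cite[Proposition 2.8]{MA} — yields $0\ne r\in R$ and a morphism $h:Y\to X$ with $h\eta=r\,I_X$. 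Finally I would upgrade this one-sided relation: the endomorphism $\eta h-r\,I_Y$ vanishes on $Im\,\eta$, so its kernel is essential in $Y$ and its image — a torsion quotient of $Y$ — lands in the torsion-free module $Y$ and must vanish. Hence $\eta h=r\,I_Y$ as well, so $\eta$ and $h$ are mutually inverse up to the scalar $r$; in particular $rY=Im(\eta h)\subseteq Im\,\eta$, which realizes the claimed isomorphism $\eta:X\to r\,{_eext^{n+1}_R(\frac{rR}{a^n},B)}$.

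The step I expect to be the real obstacle is this last upgrade, i.e.\ passing from the essential retraction $h\eta=r\,I_X$ to a two-sided relation and thence to the identification of $Im\,\eta$ with $rY$. The $e$-injectivity of $X$ only controls behaviour on the essential submodule $Im\,\eta$, and turning ``agreement on an essential submodule'' into ``agreement on all of $Y$'' is precisely where the torsion-freeness of $Y$ established above must be invoked; dropping that hypothesis leaves $\eta$ merely an essential monomorphism rather than an isomorphism onto $rY$. A secondary point requiring care is the bookkeeping of scalars: the nonzero elements arising from the $e$-exactness of Corollary~\ref{long} and from the $e$-injective extension must be merged into one $r$, which is harmless over the domain $R$ since a finite product of nonzero elements is again nonzero.
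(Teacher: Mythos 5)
Your proposal follows essentially the same route as the paper: feed $0\to a^n\to R\to \frac{rR}{a^n}\to 0$ into Corollary~\ref{long}, kill the terms ${_eext^i_R(R,B)}$ for $i\ge 1$ because $R$ is $e$-projective to get $Ker\,\eta=0$ and $Im\,\eta\leqslant_e {_eext^{n+1}_R(\frac{rR}{a^n},B)}$, and then use the $e$-injectivity hypothesis to produce the scalar $r$ and the essential retraction giving $r\,{_eext^{n+1}_R(\frac{rR}{a^n},B)}=Im\,\eta$. The only difference is one of detail, not of strategy: where the paper simply cites \cite[Proposition 2.8]{MA} for the retraction, you unwind it from the definition of $e$-injectivity and supply the torsion-freeness of the target and the one-sided-to-two-sided upgrade explicitly.
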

\begin{proof}
	  Corollay \ref{long} induces a long $e$-exact sequences of $_eext^n_R$ and by $e$-exactness and definition of essential we obtain $Ker \eta=0$ implies $\eta$ is monic. It is remain to show that $\eta$ is an epic. By \cite[Proposition 2.8]{MA} there exists a homomorphism $g:Im\eta\to {_eext^{n+1}_R(\frac{rR}{a^n},B)}$ such that $f_3\circ g=r I_{{_eext^{n+1}_R(\frac{rR}{a^n},B)}}$ which means that $r{_eext^{n+1}_R(\frac{rR}{a^n},B)}=Im\eta$. Therefore, $\eta$ is an isomorpism.
	   \[\begin{tikzcd}[sep=2cm]
	  	{} & {_eext^{n+1}_R(\frac{rR}{a^n},B)} & {} \\
	  	0 \ar{r} & {_eext^n_R(a^n,B)} \ar{u}{\eta} \ar[r,swap]{r}{f_1} & Im\eta \ar[dashed,swap]{lu}{f_3}\ar[lu,<-,shift left=2,"g"]
	  \end{tikzcd}\]
\end{proof}
 The Mayer-Vietoris sequence involves two ideals so the $a$ be the first ideal and b will denote a second ideal. In the following theorem we have the new $e$-exact sequences which is obtain by generalize the idea of Mayer-vietores sequence.
\begin{theorem}\label{SHD}
	For any torsion-free $R$-module $B$, there is an $e$-exact sequence $0\to D_{r(a+b)}(B)\to D_a(B)\bigoplus D_b(B)\to D_{a\cap b}(B)\to rH^2_{r(a+b)}(B)\to rH^2_{a}(B)\bigoplus rH^2_{b}(B)\to rH^2_{a\cap b}(B)\to \dots$ for some $0\ne r\in R$.
\end{theorem}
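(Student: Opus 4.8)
The plan is to mimic the classical construction of the Mayer--Vietoris sequence, but to start from the ideals themselves rather than from their quotients, so that the contravariant derived functors $_eext^{\bullet}_R(-,B)$ return ideal transforms in degree zero and a degree shift into $H^2$ afterwards. Concretely, I would begin from the short exact sequence of $R$-modules
$$0\to a^n\cap b^n \xrightarrow{\alpha} a^n\oplus b^n \xrightarrow{\beta} a^n+b^n\to 0,$$
where $\alpha(x)=(x,x)$ and $\beta(x,y)=x-y$. This is an ordinary exact sequence, hence $e$-exact, and the verification is the routine check that $\alpha$ is monic, $\beta$ is epic, and $\operatorname{im}\alpha=\ker\beta$ is exactly the diagonal. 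Since it is exact for each $n$, these sequences are compatible with the inverse systems $(a^n)_n$, $(b^n)_n$, $(a^n+b^n)_n$ and $(a^n\cap b^n)_n$.

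Next I apply $\mathrm{Hom}(-,B)$ together with its right essential derived functors. By Corollary \ref{long}, each of the above short exact sequences yields a long $e$-exact sequence
$$0\to r_1\mathrm{Hom}(a^n+b^n,B)\to r_2\bigl(\mathrm{Hom}(a^n,B)\oplus\mathrm{Hom}(b^n,B)\bigr)\to r_3\mathrm{Hom}(a^n\cap b^n,B)\to {_eext^1_R(a^n+b^n,B)}\to\cdots,$$
where I use that $\mathrm{Hom}(-,B)$ and each $_eext^i_R(-,B)$ carry the finite direct sum $a^n\oplus b^n$ to the corresponding direct sum. I then pass to the direct limit over $n$. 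Because the direct limit preserves $e$-exactness (as recalled at the start of this section), the degree-zero terms become $D_{a+b}(B)$, $D_a(B)\oplus D_b(B)$ and $D_{a\cap b}(B)$ by the definition $D_a(B)=\varinjlim\mathrm{Hom}(a^n,B)$, once one knows the system $(a^n\cap b^n)_n$ is cofinal with $((a\cap b)^n)_n$.

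To obtain the $H^2$ terms I invoke the degree shift. Applying $_eext^{\bullet}_R(-,B)$ to the $e$-exact sequence $0\to a^n\to R\to rR/a^n\to 0$ and using that $R$ is $e$-projective (so $_eext^{i}_R(R,B)=0$ for $i\ge 1$) gives, as in Theorem \ref{3.4}, an isomorphism $_eext^{i}_R(a^n,B)\cong r\,{_eext^{i+1}_R(rR/a^n,B)}$ for $i\ge 1$; passing to the limit identifies $\varinjlim{_eext^1_R(a^n,B)}$ with $rH^2_a(B)$, and likewise for $b$, $a+b$ and $a\cap b$. Substituting these identifications, together with the degree-zero computation of Theorem \ref{co}, into the limit of the long $e$-exact sequence yields the asserted sequence
$$0\to D_{r(a+b)}(B)\to D_a(B)\oplus D_b(B)\to D_{a\cap b}(B)\to rH^2_{r(a+b)}(B)\to rH^2_a(B)\oplus rH^2_b(B)\to rH^2_{a\cap b}(B)\to\cdots.$$

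The step I expect to be the main obstacle is the \emph{bookkeeping of the essential factors}: every application of a functor, every connecting isomorphism coming from Theorem \ref{co} and Theorem \ref{3.4}, and the passage to the limit each introduce their own nonzero multiplier, and one must show that all of these can be absorbed into a single $0\neq r\in R$ — this is precisely what produces the $r(a+b)$ inside the transforms and the factor $r$ in front of the $H^2$ terms. A secondary technical point needing care is the justification that $(a^n\cap b^n)_n$ is cofinal with $((a\cap b)^n)_n$, so that its limit genuinely computes $D_{a\cap b}(B)$, and that the derived functors commute with the finite direct sums and with the direct limit in the $e$-exact setting.
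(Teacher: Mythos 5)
Your proposal follows essentially the same route as the paper: a Mayer--Vietoris-type short ($e$-)exact sequence of ideals $0\to a^n\cap b^n\to a^n\oplus b^n\to a^n+b^n\to 0$, the long $e$-exact sequence of ${_eext}^{\bullet}_R(-,B)$ from Corollary~\ref{long}, passage to direct limits, and the degree shift of Theorem~\ref{3.4} to convert the $\varinjlim{_eext}^1$ terms into $rH^2$ terms. If anything you are more careful than the paper, which suppresses the powers $a^n,b^n$ and does not mention the cofinality of $(a^n\cap b^n)_n$ with $((a\cap b)^n)_n$ or the accumulation of the essential multipliers --- issues you correctly flag as the remaining technical obstacles.
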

\begin{proof}
	We have an $e$-exact sequences $0\to a\cap b\to a\bigoplus b\to \frac{r(a\bigoplus b)}{a\cap b}\to 0$, where $0\ne r\in R$. By Corollay \ref{long} induces a long $e$-exat sequences $0\to Hom (r(a+b),B)\to Hom(a,B)\bigoplus Hom(b,B)\to Hom(a\cap b,B)\to {_eext^1_R(r(a+b),B)}\to {_eext^1_R(a,B)}\bigoplus {_eext^1_R(b,B)}\to {_eext^1_R(a\cap b,B)}\to \dots$
	 Now pass to direct limits \\
	 $0\to\varinjlim_{n\in N}Hom(r(a+b),B)\to \varinjlim_{n\in N} Hom(a,B)\bigoplus\varinjlim_{n\in N}Hom(b,B)\to\varinjlim_{n\in N}Hom(a\cap b,B)\to\varinjlim_{n\in N}{_eext^1(r(a+b),B)}\to
	 \varinjlim_{n\in N}{_eext^1_R(a,B)}\bigoplus\\\varinjlim_{n\in N}{_eext^1_R(b,B)}\to\varinjlim_{n\in N}{_eext^1_R(a\cap b,B)}\to\dots$, then by using a natural equivalent and Theorem \ref{3.4} induces an $e$-exact sequences\\ $0\to D_{r(a+b)}(B)\to D_a(B)\bigoplus D_b(B)\to D_{a\cap b}(B)\to rH^2_{r(a+b)}(B)\to rH^2_{a}(B)\bigoplus rH^2_{b}(B)\to rH^2_{a\cap b}(B)\to \dots$.
\end{proof}
\vskip 0.4 true cm

\end{document}